\newtheorem{theorem}{Theorem}[section]
\newtheorem{lemma}[theorem]{Lemma}
\theoremstyle{definition}
\newtheorem {definition}[theorem]{Definition}
\theoremstyle{remark}
\newtheorem{remark}[theorem]{Remark}
\def\ra{\rightarrow}
\def\iy{\infty}
\def\be{\begin{equation}}
\def\ee{\end{equation}}
\def\ba{\begin{eqnarray*}}
\def\ea{\end{eqnarray*}}
\def\bae{\begin{eqnarray}}
\def\eae{\end{eqnarray}}
\def\bc{\begin{center}}
\def\ec{\end{center}}
\begin{document}
\title{Limit Distribution of Eigenvalues for Random Hankel and Toeplitz Band Matrices}
\date{September, 2009}
\author{
Dang-Zheng Liu \ and
Zheng-Dong Wang\\
School of Mathematical Sciences\\
Peking University\\
Beijing, 100871, P. R. China }

\maketitle

\begin{abstract}
Consider real symmetric, complex Hermitian Toeplitz and real
symmetric Hankel band matrix models, where the bandwidth $b_{N}\ra
\iy$ but $b_{N}/N \rightarrow b$, $b\in [0,1]$ as $N\rightarrow
\infty$. We prove  that the distributions of eigenvalues converge
 weakly  to universal, symmetric distributions
$\gamma_{_{T}}(b)$ and $\gamma_{_{H}}(b)$. In the case $b>0$ or
$b=0$ but with the addition of $b_{N}\geq
C\,N^{\frac{1}{2}+\epsilon_{0}}$ for some positive constants
$\epsilon_{0}$ and $C$, we prove
 almost sure convergence. The even moments of these
distributions are the  sum of some integrals related to certain pair
partitions. In particular, when the bandwidth grows slowly, i.e.
$b=0$, $\gamma_{_{T}}(0)$ is the standard Gaussian distribution and
$\gamma_{_{H}}(0)$ is the distribution $|x|\,\exp(-x^{2})$. In
addition, from the fourth moments we know that the
$\gamma_{_{T}}(b)$'s are different for different $b$'s, the
$\gamma_{_{H}}(b)$'s different for different $b\in [0,\frac{1}{2}]$
and the $\gamma_{_{H}}(b)$'s different for different $b\in
[\frac{1}{2},1]$.
\end{abstract}

\footnotetext{\textbf{Key words:} Random matrix theory; distribution
of eigenvalues; Toeplitz band matrices; Hankel band matrices.}

\section{Introduction}
In Random Matrix Theory, the most important information is contained
in the eigenvalues of matrices and the most prominent analytical
object is the  distribution of eigenvalues. That is,  for a real
symmetric or complex Hermitian $N \times N$ matrix $A$ with
eigenvalues $\lambda_{1}, \cdots, \lambda_{N}$, the distribution of
its eigenvalues is the normalized probability measure
\be\label{ave.distr}\mu_{A}:=\frac{1}{N}\sum_{j=1}^{N}\delta_{\lambda_{j}}.\ee
If $A$ is a random matrix, then $\mu_{A}$ is a random measure. 

The limit distribution of (\ref{ave.distr}) is of much interest in
Random Matrix Theory. In \cite{wigner1}, Wigner found his famous
semicircular law and then in \cite{wigner2} pointed out that the
semicircular law was valid for a wide class of real symmetric random
matrices. Since then much more has been done on various random
matrix models, a standard reference is Metha's book \cite{mehta2}.
Recently in his review paper \cite{bai}, Bai proposes the study of
random matrix models with certain additional linear structure. In
particular, the properties of the distributions of eigenvalues for
random Hankel and Toeplitz matrices with independent entries are
listed among the unsolved random matrix problems posed in
\cite{bai}, Section 6. Bryc, Dembo and Jiang \cite{bdj} proved the
existence of limit distributions $\gamma_{_{H}}$ and $\gamma_{_{T}}$
for real symmetric Hankel and Toeplitz matrices. The moments of
$\gamma_{_{H}}$ and $\gamma_{_{T}}$ are the sum of volumes of
solids, from which we can see that $\gamma_{_{H}}$ and
$\gamma_{_{T}}$ are symmetric and of unbounded support. At the same
time Hammond and Miller \cite{hm} also independently proved the
existence of limit distribution for symmetric Toeplitz matrices. In
the present paper we shall prove the existence of limit distribution
of eigenvalues for real symmetric and complex Hermitian Toeplitz
band matrices, and also for real symmetric Hankel band matrices.
Random band matrices have arisen in connection with the theory of
quantum chaos \cite{cmi,fm} , and the limit distribution of
eigenvalues of band matrices has been studied in \cite{bmp,mpk}.

Note that Toeplitz matrices emerge in many aspects of mathematics
and physics and also in plenty of applications, e.g.\,\cite{bg},
especially \cite{basor,diaconis} for connections with random
matrices. Hankel matrices arise naturally in problems involving
power moments, and are closely related to Toeplitz matrices.
Explicitly, for a Toeplitz matrix of the form
$T_{N}=(a_{i-j})_{i,j=1}^{N}$ and a Hankel matrix of the form
$H_{N}=(a_{i+j-2})_{i,j=1}^{N}$, if
$P_{N}=(\delta_{i-1,N-j})_{i,j=1}^{N}$ the ``backward identity"
permutation, then $P_{N}T_{N}$ is a Hankel matrix  and $P_{N}H_{N}$
is a Toeplitz matrix. In this paper we always write a Hankel matrix
$H_{N}=P_{N}T_{N}$ where we assume the matrix entries $a_{-N+1},
\cdots, a_{0}, \cdots,a_{N-1}$ are real-valued, thus $H_{N}$ is a
real symmetric matrix. In addition, if we introduce the Toeplitz or
Jordan matrices $B=(\delta_{i+1,j})_{i,j=1}^{N}$ and
$F=(\delta_{i,j+1})_{i,j=1}^{N}$, called the ``backward shift" and
``forward shift" because of their effect on the elements of the
standard basis $\{e_{1}, \cdots, e_{N}\}$ of $\mathbb{R}^{N}$, then
an $N \times N$ matrix $A$ can be written in the form \be
A=\sum_{j=0}^{N-1}a_{-j} B^{j}+ \sum_{j=1}^{N-1}a_{j} F^{j}\ee if
and only if $A$ is a Toeplitz matrix where $a_{-N+1}, \cdots, a_{0},
\cdots,a_{N-1}$ are complex numbers \cite{hj}. It is worth
emphasizing that this representation of a Toeplitz matrix is of
vital importance and the starting point
 of our method. The
``shift" matrices $B$ and $F$ exactly present the information of the
traces.

Consider a Toeplitz band matrix as follows. Given a band width
$b_{N}<N$, let \be{\eta_{ij}=
 \begin{cases} 1,\ \ |i-j|\leq b_{N};\\
0,\ \ \text{otherwise.}
\end{cases}
}\ee Then a Toeplitz band matrix is
\be{\label{bandtmatrices}T_{N}=(\eta_{ij}\,
a_{i-j})_{i,j=1}^{N}.}\ee Moreover, the Toeplitz band matrix $T_{N}$
can be also written in the form

\be \label{basicrepresentation}T_{N}=\sum_{j=0}^{b_{N}}a_{-j} B^{j}+
\sum_{j=1}^{b_{N}}a_{j} F^{j}.\ee Obviously, a Toeplitz matrix can
be considered as a band matrix with the bandwidth $b_{N}=N-1$. Note
that when referring to a Hankel band matrix $H_{N}$, we always mean
$H_{N}=P_{N}T_{N}$ where $T_{N}$ is a Toeplitz band matrix.

In this paper the three models under consideration are :

(i) Hermitian Toeplitz band matrices. The model consists of
$N$-dimensional random Hermitian matrices $T_{N}=(\eta_{ij}\,
a_{i-j})_{i,j=1}^{N}$ in Eq.\,(\ref{bandtmatrices}). We assume that
$\mbox{Re}\,a_{j}=\mbox{Re}\,a_{-j}$ and
$\mbox{Im}\,a_{j}=-\mbox{Im}\,a_{-j}$ for $j=1,2,\cdots$, and
$\{a_{0}\}\cup \{\mbox{Re}\,a_{j},\mbox{Im}\,a_{j}\}_{j\in
\mathbb{N}}$ is a sequence of independent real random variables such
that \be \label{hermitiantoeplitz1}\mathbb{E}[a_{j}]=0, \ \
\mathbb{E}[|a_{j}|^{2}]=1 \ \ \textrm{for}\,\ \ j\in \mathbb{Z}\ee
 \noindent and further \be\label{hermitiantoeplitz2}\sup\limits_{ j\in \mathbb{Z}}
\mathbb{E}[|a_{j}|^{k}]=C_{k}<\iy\ \ \  \textrm{for} \ \ \ k\in
\mathbb{N}.\ee Notice that \begin{equation*} \sup\limits_{ j\in
\mathbb{Z}} \mathbb{E}[|a_{j}|^{k}]<\iy \Longleftrightarrow
\sup\limits_{ j\in \mathbb{Z}} \{\mathbb{E}[|\textrm{Re}
\,a_{j}|^{k}], \mathbb{E}[|\textrm{Im}\, a_{j}|^{k}]\}<\iy
,\end{equation*} and the assumption (\ref{hermitiantoeplitz2}) shows
that when fixing $k$, these moments of the independent random
variables whose orders are not larger than $k$ can be controlled by
some constant only depending on $k$.

The case of  real symmetric  Toeplitz band matrices is very similar
to the Hermitian case, except that we now consider $N$-dimensional
real symmetric matrices $T_{N}=(\eta_{ij}\, a_{i-j})_{i,j=1}^{N}$ .

(ii) Symmetric Toeplitz band matrices. We assume that
$a_{j}=a_{-j}$ for $j=0,1,\cdots$, and $\{a_{j}\}_{j=0}^{\iy}$ is a
sequence of independent real random variables such that

\be\label{symmetrictoeplitz1} \mathbb{E}[a_{j}]=0, \ \
\mathbb{E}[|a_{j}|^{2}]=1 \ \ \textrm{for}\,\ \ j=0,1,\cdots,\ee

 \noindent and further \be\label{symmetrictoeplitz2}\sup\limits_{j\geq 0} \mathbb{E}[|a_{j}|^{k}]=C_{k}<\iy\ \ \
\textrm{for} \ \ \ k\in \mathbb{N}.\ee

(iii) Symmetric Hankel band matrices. Let $T_{N}=(\eta_{ij}\,
a_{i-j})_{i,j=1}^{N}$ be a Toeplitz band matrix and
$H_{N}=P_{N}T_{N}$ be the corresponding Hankel band matrix. We
assume that $\{a_{j}\}_{j\in \mathbb{Z}}$ is a sequence of
independent real random variables such that

\be\label{symmetrichankel1} \mathbb{E}[a_{j}]=0, \ \
\mathbb{E}[|a_{j}|^{2}]=1 \ \ \textrm{for}\,\ \ j\in \mathbb{Z}\ee

 \noindent and further \be\label{symmetrichankel2}\sup\limits_{ j\in \mathbb{Z}} \mathbb{E}[|a_{j}|^{k}]=C_{k}<\iy\ \
\  \textrm{for} \ \ \ k\in \mathbb{N}.\ee

We assume that $b_{N}$ grows either as 1) $b_{N}/N \rightarrow b$ as
$N\rightarrow \infty$, $b\in (0,1]$ (proportional growth), or as
  2)
 $b_{N}\ra \iy$ as $N\rightarrow \infty$, $b_{N}=o(N)$ (slow
 growth).
 Under these assumptions we will establish the limit  distributions $\gamma_{_{T}}(b)$ for
Toeplitz band matrices and $\gamma_{_{H}}(b)$ for Hankel band
matrices. In the case $b>0$, we  prove that the distribution of
eigenvalues for Toeplitz (Hankel) band matrices converges almost
surely to $\gamma_{_{T}}(b)$ ($\gamma_{_{H}}(b)$), the moments of
which  are the sum of some integrals depending on $b$. For $b=0$, we
obtain that  $\gamma_{_{T}}(0)$ is the standard Gaussian
distribution and $\gamma_{_{H}}(0)$ is the distribution
$|x|\,\exp(-x^{2})$. In the case $b=0$, with the addition of
$b_{N}\geq C\,N^{\frac{1}{2}+\epsilon_{0}}$ for some positive
constants $\epsilon_{0}$ and $C$, we also prove
 almost sure convergence.

 The plan of the remaining part of our paper is the following. Sections 2 and 3 are devoted to the statements and proofs
 of the main theorems for band matrices with proportionally growing  and slowly growing bandwidths $b_{N}$, respectively.
In Section 4, the fourth moments of the limit distributions
$\gamma_{_{T}}(b)$ and $\gamma_{_{H}}(b)$ are calculated,  which
shows their difference for different $b$'s.

\section{Proportional Growth of $b_{N}$}
\setcounter{equation}{0} In order to calculate the moments of the
limit distribution, we review some basic combinatorical concepts.

\begin{definition} Let the set $[n]=\{1,2,\cdots,n\}$.

(1) We call $\pi=\{V_{1},\cdots,V_{r}\}$ a partition of $[n]$ if the
blocks  $V_{j} \,(1\leq j \leq r)$ are pairwise disjoint, non-empty
subsets of $[n]$ such that $[n]=V_{1}\cup\cdots\cup V_{r}$. The
number of blocks of $\pi$ is denoted by $|\pi|$, and the number of
the elements of $V_{j}$ is denoted by $|V_{j}|$.

(2) Without loss of generalization, we assume that
$V_{1},\cdots,V_{r}$ have been arranged such that
$s_{1}<s_{2}<\cdots<s_{r}$, where $s_{j}$ is the smallest number of
$V_{j}$. Therefore we can define the projection $\pi (i)=j$ if $i$
belongs to the block $V_{j}$; furthermore for two elements $p,q$ of
$[n]$ we write $p\thicksim_{\pi} q$ if $\pi (p)=\pi (q)$.

(3) The set of all partitions of $[n]$ is denoted by
$\mathcal{P}(n)$, and the subset consisting of all pair partitions,
i.e. all $|V_{j}|=2$, $1\leq j \leq r$, is denoted by
$\mathcal{P}_{2 }(n)$. The subset of $\mathcal{P}_{2 }(n)$
consisting of such pair partitions that each contains exactly one
even number and one odd number is denoted by $\mathcal{P}^{1}_{2
}(n)$. Note that $\mathcal{P}_{2 }(n)$ is an empty set if $n$ is
odd.
\end{definition}

 We can formulate our results for Hankel and
Toeplitz band matrices with the proportional growth of $b_{N}$ as
follows. By convention, $I_{B}$  represents the characteristic
function of a set $B$.

\begin{theorem}\label{tpg}Let $T_{N}$ be either a Hermitian  ((\ref{hermitiantoeplitz1})--(\ref{hermitiantoeplitz2}))
or real symmetric
((\ref{symmetrictoeplitz1})--(\ref{symmetrictoeplitz2})) Toeplitz
random band matrix, where $b_{N}/N \rightarrow b$ as $N\rightarrow
\infty$, $b\in (0,1]$. Take the normalization
$X_{N}=T_{N}/\sqrt{(2-b)b N}$, then $\mu_{X_{N}}$ converges almost
surely to a symmetric probability distribution $\gamma_{_{T}}(b)$
which is determined by its even moments \be
\label{bandtoeplitz:moment}m_{2k}(\gamma_{_{T}}(b))=\frac{1}{(2-b)^{k}}\sum_{\pi\in
\mathcal{P}_{2 }(2k)}\int_{[0,1]\times
[-1,1]^{k}}\prod_{j=1}^{2k}I_{[0,1]}(x_{0}+b\sum_{i=1}^{j}\epsilon_{\pi}(i)\,x_{\pi(i)})
\prod_{l=0}^{k}\mathrm{d}\, x_{l}\ee  where $\epsilon_{\pi}(i)=1$ if
$i$ is the smallest number of $\pi^{-1}(\pi(i))$; otherwise,
$\epsilon_{\pi}(i)=-1$.

\end{theorem}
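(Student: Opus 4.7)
My approach is the method of moments. The integrand in (\ref{bandtoeplitz:moment}) is bounded by $1$ and $|\mathcal{P}_2(2k)|=(2k-1)!!$, so the candidate moment sequence is bounded by $C^k(2k)!$ for a universal constant $C$, satisfying Carleman's condition and determining a unique symmetric distribution $\gamma_{_{T}}(b)$. It therefore suffices to prove that (a) $\mathbb{E}\,m_{2k}(\mu_{X_N})$ converges to the right-hand side of (\ref{bandtoeplitz:moment}) for every $k$, and (b) $\mathrm{Var}\,m_{2k}(\mu_{X_N})=O(N^{-2})$; almost sure convergence then follows by Chebyshev and Borel--Cantelli, and the odd moments vanish automatically from the pair-matching analysis below.

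The starting point is the representation (\ref{basicrepresentation}), which expands $\mathrm{tr}\,T_N^{2k}$ into a sum over index sequences $(\epsilon_l,j_l)_{l=1}^{2k}$ with $\epsilon_l\in\{B,F\}$ and $0\leq j_l\leq b_N$, each term carrying the coefficient $\prod_{l=1}^{2k}a_{\sigma(\epsilon_l)j_l}$ (with $\sigma(B)=-1$, $\sigma(F)=+1$) and weighted by the number of closed walks $i_0\to i_1\to\cdots\to i_{2k}=i_0$ in $[1,N]$ with step $\sigma(\epsilon_l)j_l$ at stage $l$. Taking expectation, independence and the mean-zero hypothesis force every distinct $\sigma(\epsilon_l)j_l$ to appear at least twice; using $\overline{a_j}=a_{-j}$ (Hermitian case) or $a_j=a_{-j}$ (real symmetric case) together with the moment bound (\ref{hermitiantoeplitz2}), one shows that partitions of $[2k]$ containing a block of size $\geq 3$ contribute $O(Nb_N^{k-1})=o(N^{k+1})$, because each such block collapses several $j_l$ into a single free parameter. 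Only $\pi\in\mathcal{P}_2(2k)$ survives at leading order, and within each pair $\{l_1<l_2\}$ the condition $\mathbb{E}|a_j|^2=1$ forces $\sigma(\epsilon_{l_1})j_{l_1}=-\sigma(\epsilon_{l_2})j_{l_2}$.

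For each surviving $\pi$, I rescale $i_0=Nx_0$ with $x_0\in[0,1]$, and for each block $V_p$ write the signed shift as $bNx_p$ with $x_p\in[-1,1]$; this convention reproduces $\epsilon_\pi(i)$ exactly as in (\ref{bandtoeplitz:moment}), and the walk constraint $i_l\in[1,N]$ becomes $x_0+b\sum_{i=1}^l\epsilon_\pi(i)x_{\pi(i)}\in[0,1]$. Combining with the normalization $(N((2-b)bN)^k)^{-1}$ and the summation Jacobian asymptotic to $N\cdot(bN)^k$ yields the prefactor $(2-b)^{-k}$, and the discrete Riemann sum converges to the Lebesgue integral in (\ref{bandtoeplitz:moment}) by dominated convergence. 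For the variance estimate, I expand $\mathbb{E}(\mathrm{tr}\,T_N^{2k})^2$ as a sum over pairs of walks: pair partitions of $[4k]$ matching entirely within one walk produce $(\mathbb{E}\,\mathrm{tr}\,T_N^{2k})^2$, while ``cross-linked'' pair partitions lose one free starting index and one free shift parameter, yielding the $O(N^{-2})$ bound.

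The main obstacle is the reduction to pair partitions in the presence of the linear constraint $\sum_l\sigma(\epsilon_l)j_l=0$: one must show that every partition with a block of size $\geq 3$ loses enough degrees of freedom to fall below the leading order, uniformly in $\pi$ and in the accidental coincidences arising when blocks share common $j$-values. A secondary difficulty is the uniform Riemann-sum convergence when $b\in(0,1)$: the integrand is a product of indicators with discontinuities along hyperplanes depending on $\pi$, so an $\varepsilon$-smoothing or a direct boundary-volume estimate is required to quantify the approximation error.
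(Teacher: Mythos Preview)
Your moment computation and Carleman verification follow the same path as the paper: expand the trace via the shift representation, reduce to pair partitions using the closure constraint $\sum_l j_l=0$ to kill both non-pair blocks and the ``wrong-sign'' pairings $j_p=j_q$, then recognize the resulting sum as a Riemann sum for the stated integral. That part is fine.

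The gap is in part (b). Your claim that $\mathrm{Var}\,m_{2k}(\mu_{X_N})=O(N^{-2})$ is not correct for Toeplitz matrices: the variance is only $O(N^{-1})$, which is not summable, so Chebyshev plus Borel--Cantelli does not deliver almost sure convergence. Concretely, for two traces with a cross-linked pair partition of $[4k]$, parity forces at least two cross-links, but both cross-links connect the \emph{same} two traces; the closure constraints $\sum_q i_q=0$ and $\sum_q j_q=0$ then become linearly dependent (the internal pairs already sum to zero, and the two equations on the cross-linked shifts are identical up to sign), so only \emph{one} $j$-degree of freedom is lost. The starting indices $i_0$, $j_0$ remain completely free---nothing in the cross-link ties them together---so your asserted ``loss of one free starting index'' does not occur. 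The net count is $N^2\cdot b_N^{2k-1}$ against a normalization $N^{2k+2}$, giving $N^{-1}$.

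The paper repairs this by bounding the \emph{fourth} centered moment $\frac{1}{N^4}\mathbb{E}\bigl[(\mathrm{tr}\,X_N^{k}-\mathbb{E}\,\mathrm{tr}\,X_N^{k})^4\bigr]$. With four traces, the requirement that no trace be isolated forces cross-links spread across at least two distinct pairs of traces, and the four closure constraints now yield two genuinely independent linear relations on the shift variables; this gives the summable bound $O(N^{-2})$. You should replace your second-moment argument by this fourth-moment one.
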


\begin{theorem}\label{hpg}
Let $H_{N}$ be a real symmetric
((\ref{symmetrichankel1})--(\ref{symmetrichankel2})) Hankel random
band matrix, where $b_{N}/N \rightarrow b$ as $N\rightarrow \infty$,
$b\in (0,1]$. Take the normalization $Y_{N}=H_{N}/\sqrt{(2-b)b N}$,
then $\mu_{Y_{N}}$ converges almost surely to a symmetric
probability distribution $\gamma_{_{H}}(b)$ which is determined by
its even moments \be
\label{bandhankel:moment}m_{2k}(\gamma_{_{H}}(b))=\frac{1}{(2-b)^{k}}
\sum_{\pi\in \mathcal{P}^{1}_{2 }(2k)}\int_{[0,1]\times
[-1,1]^{k}}\prod_{j=1}^{2k}I_{[0,1]}(x_{0}-b\sum_{i=1}^{j}(-1)^{i}\,x_{\pi(i)})
\prod_{l=0}^{k}\mathrm{d}\, x_{l}. \ee
\end{theorem}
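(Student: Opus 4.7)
The plan is the method of moments. I would establish: (i) the expected moments $\mathbb{E}[m_{2k}(\mu_{Y_N})]$ converge to the right-hand side of (\ref{bandhankel:moment}); (ii) $\mathrm{Var}(m_{2k}(\mu_{Y_N}))=O(N^{-2})$, so that Chebyshev plus Borel--Cantelli upgrades this to the a.s.\ statement; (iii) the odd moments vanish in the limit; and (iv) Carleman's condition for the limit moments, which pins down $\gamma_{_{H}}(b)$ uniquely. Items (ii)--(iv) would parallel the Toeplitz proof of Theorem \ref{tpg} closely, so the essential new content lies in (i).

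Starting from $(H_N)_{ij}=\eta_{N+1-i,\,j}\,a_{N+1-i-j}$, I would expand
\[
\mathbb{E}[\mathrm{tr}(H_N^{2k})] = \sum_{i_1,\dots,i_{2k}=1}^{N}\Bigl(\prod_{j=1}^{2k}\eta_{N+1-i_j,\,i_{j+1}}\Bigr)\,\mathbb{E}\Bigl[\prod_{j=1}^{2k}a_{\alpha_j}\Bigr],
\]
with $i_{2k+1}=i_1$ and $\alpha_j := N+1-i_j-i_{j+1}$. Mean-zero independence forces each distinct $a$-label to have multiplicity at least two, and the uniform moment bound (\ref{symmetrichankel2}) together with a standard graph count (as in the Toeplitz case) shows that partitions with a block of size $\ge 3$ contribute $O(N^k)$, one power below pair partitions; hence only $\pi\in\mathcal{P}_2(2k)$ survives at the leading order $N^{k+1}$.

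The Hankel-specific step is then a parity filter on $\mathcal{P}_2(2k)$. For a pair $\{p,q\}\in\pi$ the constraint $a_{\alpha_p}=a_{\alpha_q}$ becomes $\alpha_p=\alpha_q$. The affine recurrence $i_{j+1}=(N+1)-\alpha_j-i_j$ solves explicitly as
\[
i_{j+1} = (-1)^j i_1 + \tfrac{1-(-1)^j}{2}(N+1) - \sum_{\ell=1}^{j}(-1)^{j-\ell}\alpha_\ell,
\]
so the cyclic closure $i_{2k+1}=i_1$ is equivalent to $\sum_{\ell=1}^{2k}(-1)^\ell\alpha_\ell=0$. After pair identifications, each block $\{p,q\}$ contributes $((-1)^p+(-1)^q)\tilde{\alpha}_{\pi(p)}$ to this sum; this vanishes identically iff $p$ and $q$ have opposite parities, and otherwise imposes a genuine linear relation on the $k$ independent $\tilde{\alpha}$'s, costing one degree of freedom and reducing the contribution to $O(N^k)$. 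So only $\pi \in \mathcal{P}^{1}_{2}(2k)$ survives.

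For such $\pi$, the free parameters are $(i_1,\tilde{\alpha}_1,\dots,\tilde{\alpha}_k)$, subject to $1\le i_j\le N$ and $|\tilde{\alpha}_l|\le b_N$. Rescaling $x_0=i_1/N$ and $x_l=\tilde{\alpha}_l/b_N$, using $b_N/N\to b$, and exploiting the symmetry $I_{[0,1]}(1-z)=I_{[0,1]}(z)$ (which absorbs the $(N+1)$ shift at even-indexed sites into a single indicator form), the admissibility of $i_{j+1}$ becomes exactly $I_{[0,1]}(x_0-b\sum_{i=1}^{j}(-1)^i x_{\pi(i)})$; the Riemann sum weighted by $1/(N\,((2-b)bN)^k)$ then converges to the integral in (\ref{bandhankel:moment}), producing the prefactor $(2-b)^{-k}$. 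For the variance I would apply the same graph counting to a doubled cycle of length $4k$, verifying that pairings which bridge the two copies cost an additional power of $N$. The main obstacle is the parity filter itself---rigorously confirming that a same-parity block truly deducts a full power of $N$ in the sum-type Hankel setting, where the cancellation arguments available for difference-type Toeplitz constraints do not apply directly, and that the $(N+1)$ shift at even-indexed sites is cleanly absorbed into the uniform indicator form of the theorem.
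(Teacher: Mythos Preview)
Your moment computation is essentially the paper's own argument: the paper packages the recurrence and the closure identity $\sum_{q=1}^{2k}(-1)^{q}j_{q}=0$ into Lemma~\ref{hankellemma} and then copies the Toeplitz proof verbatim, but the content---pair partitions dominate, and the alternating-sign constraint forces each block to contain one even and one odd index, whence $\mathcal{P}^{1}_{2}(2k)$---is exactly what you wrote.  The parity filter is not an obstacle: if a block $\{p,q\}$ has $p\equiv q\pmod 2$, the coefficient $(-1)^{p}+(-1)^{q}=\pm 2$ is nonzero, so the closure relation genuinely eliminates one of the $k$ free labels, and the contribution drops by a factor $b_{N}^{-1}\asymp N^{-1}$.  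This is the same mechanism as the Toeplitz ``$j_{p}=j_{q}$'' case in the paper's Step~1.

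Where your plan breaks is item~(ii).  The variance of $m_{2k}(\mu_{Y_{N}})$ is \emph{not} $O(N^{-2})$ for Toeplitz/Hankel ensembles---it is only $O(N^{-1})$, because there are $\asymp b_{N}\asymp N$ independent entries rather than $\asymp N^{2}$.  Concretely, for $k=1$ one has $\mathrm{tr}(H_{N}^{2})=\sum_{|j|\le b_{N}}(N-|j|)\,a_{j}^{2}$, so
\[
\mathrm{Var}\bigl(m_{2}(\mu_{Y_{N}})\bigr)=\frac{1}{N^{2}((2-b)bN)^{2}}\sum_{|j|\le b_{N}}(N-|j|)^{2}\,\mathrm{Var}(a_{j}^{2})\asymp \frac{N^{2}\cdot b_{N}}{N^{4}}\asymp N^{-1},
\]
which is not summable, and Chebyshev plus Borel--Cantelli fails.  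The paper (in Step~3 of the Toeplitz proof, reused here) instead bounds the \emph{fourth} centred moment $\mathbb{E}\bigl[(m_{k}-\mathbb{E}m_{k})^{4}\bigr]=O(N^{-2})$; with four coupled cycles one can exploit two independent cycle constraints and recover the extra power of $N$.  Replace your doubled-cycle variance estimate by the four-copy argument and the almost-sure upgrade goes through.
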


Let us first give two basic lemmas about traces of Toeplitz and
Hankel band matrices. Although their proofs are simple, they are
very useful in treating random matrix models closely related to
Toeplitz matrices.

\begin{lemma}\label{toeplitzlemma}
For a Toeplitz band matrix $T=(\eta_{ij}\, a_{i-j})_{i,j=1}^{N}$
with the bandwidth $b_{N}$ where $a_{-N+1}, \cdots,a_{N-1}$ are
 complex numbers, we have the trace formula
 \be \label{basic:lem1}
 \emph{tr}(T^{k})=\sum_{i=1}^{N}\,\sum_{j_{1},\cdots,j_{k}=-b_{N}}^{b_{N}}
\prod_{l=1}^{k}a_{j_{l}}\prod_{l=1}^{k}I_{[1,N]}(i+\sum_{q=1}^{l}j_{q})
\ \large{\delta}_{0,\sum\limits_{q=1}^{k}j_{q}}, k\in \mathbb{N}.
 \ee
\end{lemma}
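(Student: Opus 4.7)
The plan is to prove the trace formula by a direct matrix-multiplication expansion of $\mathrm{tr}(T^k)$ together with a change of summation variable from intermediate row/column indices to their consecutive differences. This is essentially a bookkeeping computation: no deep content beyond reorganizing a nested sum. One could equivalently start from the shift-matrix representation (\ref{basicrepresentation}), expand $T^k$ multilinearly, and compute $\mathrm{tr}(B^{m_1} F^{m_2} \cdots)$ directly; both routes lead to the same answer and I will use whichever turns out cleaner to typeset.

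Concretely, I would first write
\[
\mathrm{tr}(T^k) \;=\; \sum_{i_0,i_1,\ldots,i_{k-1}=1}^{N} T_{i_0,i_1}\,T_{i_1,i_2}\cdots T_{i_{k-1},i_0},
\]
set $i_k = i_0$, and observe that each factor $T_{i_{l-1},i_l} = \eta_{i_{l-1},i_l}\,a_{i_{l-1}-i_l}$ vanishes unless $|i_{l-1}-i_l| \leq b_N$. Then I would introduce new summation variables $j_l = i_l - i_{l-1}$ (with sign chosen to match the statement) for $l=1,\ldots,k$, so that each $j_l$ automatically ranges over $\{-b_N,\ldots,b_N\}$, each factor $a_{i_{l-1}-i_l}$ becomes $a_{\pm j_l}$, and the intermediate indices are recovered by $i_l = i_0 + \sum_{q=1}^{l} j_q$ (up to sign). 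This step is a bijective reparameterization of the set of length-$k$ closed walks on $[1,N]$ allowed by the band structure.

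Finally, I would read off the three ingredients of (\ref{basic:lem1}) one by one: (i) the product $\prod_{l} a_{j_l}$ of random entries, after (if necessary) the harmless substitution $j_l \mapsto -j_l$, which is permitted because the index set $\{-b_N,\ldots,b_N\}^k$ is symmetric under sign reversal; (ii) the indicator product $\prod_{l=1}^{k} I_{[1,N]}(i + \sum_{q=1}^{l} j_q)$, enforcing that every intermediate index $i_l$ actually lies in $\{1,\ldots,N\}$ rather than being truncated at the edge of the matrix; and (iii) the Kronecker delta $\delta_{0,\sum_{q} j_q}$ coming from the closing condition $i_k = i_0$. Relabeling $i_0 = i$ then yields (\ref{basic:lem1}) on the nose. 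The only point requiring mild attention is to be consistent about sign conventions when passing between the entry $a_{i-j}$ of $T$ and the shift-matrix indices in (\ref{basicrepresentation}); apart from that there is no real obstacle, since the lemma is essentially a restatement of ``what $(T^k)_{ii}$ looks like'' once one has chosen to label walks by their increments.
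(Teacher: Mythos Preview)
Your proposal is correct and essentially the same as the paper's argument: the paper computes $T^{k}e_{i}$ by iterating the shift-matrix representation (\ref{basicrepresentation}) and then takes $\mathrm{tr}(T^{k})=\sum_{i} e_{i}^{t}T^{k}e_{i}$, which is precisely your closed-walk expansion with increments $j_{l}$, only organized via basis vectors rather than matrix entries. Your remark about the sign substitution $j_{l}\mapsto -j_{l}$ is the only cosmetic difference; the paper's basis-vector formulation $T e_{i}=\sum_{j} a_{j}\,I_{[1,N]}(i+j)\,e_{i+j}$ produces the $a_{j_{l}}$ directly without that flip.
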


\begin{proof}
For the standard basis $\{e_{1}, \cdots, e_{N}\}$ of the Euclidean
space $\mathbb{R}^{N}$, we have
\[T\, e_{i}=\sum_{j=0}^{b_{N}}a_{-j}
B^{j}\,e_{i}+ \sum_{j=1}^{b_{N}}a_{j} F^{j}\,e_{i}=
\sum_{j=-b_{N}}^{b_{N}}a_{j} I_{[1,N]}(i+j)\, e_{i+j}.\] Repeating
$T$'s effect on the basis, we have
\[T^{k}\, e_{i}=
\sum_{j_{1},\cdots,j_{k}=-b_{N}}^{b_{N}}
\prod_{l=1}^{k}a_{j_{l}}\prod_{l=1}^{k}I_{[1,N]}(i+\sum_{q=1}^{l}j_{q})
\,e_{i+\sum\limits_{q=1}^{k}j_{q}}.\
\]
By $\mbox{tr}(T^{k})=\sum\limits_{i=1}^{k}e^{t}_{i}\, T^{k}
\,e_{i}$, we complete the proof.
\end{proof}

\begin{lemma}\label{hankellemma}
Given a Toeplitz band matrix $T=(\eta_{ij}\, a_{i-j})_{i,j=1}^{N}$
with the bandwidth $b_{N}$ where $a_{-N+1}, \cdots,a_{N-1}$ are
 real numbers, let $H=PT$ be the corresponding Hankel band matrix where
$P=(\delta_{i-1,N-j})_{i,j=1}^{N}$. We have the trace formula

$\emph{tr}(T^{k})$=
 \be \label{basic:lem2}
 \begin{cases} \sum\limits_{i=1}^{N}\,\sum\limits_{j_{1},\cdots,j_{k}=-b_{N}}^{b_{N}}
\prod\limits_{l=1}^{k}a_{j_{l}}\prod\limits_{l=1}^{k}I_{[1,N]}(i-\sum\limits_{q=1}^{l}(-1)^{q}j_{q})
\ \large{\delta}_{0,\sum\limits_{q=1}^{k}(-1)^{q}j_{q}},&\text{$k$ even,}\\
\sum\limits_{i=1}^{N}\,\sum\limits_{j_{1},\cdots,j_{k}=-b_{N}}^{b_{N}}
\prod\limits_{l=1}^{k}a_{j_{l}}\prod\limits_{l=1}^{k}I_{[1,N]}(i-\sum\limits_{q=1}^{l}(-1)^{q}j_{q})
\
\large{\delta}_{2i-1-N,\sum\limits_{q=1}^{k}(-1)^{q}j_{q}},&\text{$k$
odd.}
\end{cases}
 \ee
\end{lemma}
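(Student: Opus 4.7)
The plan is to mimic the proof of Lemma \ref{toeplitzlemma} but now track how the permutation $P$ interleaves with $T$ when we iterate $H = PT$ on the standard basis. The starting observation is that $P_{ji} = \delta_{j-1,N-i}$ means $P\,e_i = e_{N+1-i}$, so $P$ simply reflects an index $i \mapsto N+1-i$. Combined with the formula $T\,e_i = \sum_{j=-b_N}^{b_N} a_j I_{[1,N]}(i+j)\,e_{i+j}$ already obtained in Lemma \ref{toeplitzlemma}, this gives
\[
H\,e_i \;=\; \sum_{j=-b_N}^{b_N} a_j\,I_{[1,N]}(i+j)\,e_{N+1-i-j}.
\]

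Next I would prove by induction on $k$ that, writing $m_0 = i$ and $m_l = i - \sum_{q=1}^{l}(-1)^q j_q$,
\[
H^k e_i \;=\; \sum_{j_1,\dots,j_k=-b_N}^{b_N}\,\prod_{l=1}^{k} a_{j_l}\,\prod_{l=1}^{k} I_{[1,N]}(m_l)\; e_{\,m_k^{\ast}},
\]
where $m_k^{\ast}=m_k$ if $k$ is even and $m_k^{\ast}=N+1-m_k$ if $k$ is odd. The induction step is transparent: applying $H$ to $e_{m_{l-1}^{\ast}}$, the new summation variable $j_l$ enters with a sign opposite to $j_{l-1}$ because of the reflection produced by $P$, giving $m_l = m_{l-1}^\ast \pm j_l$ depending on parity; the crucial simplification is the symmetry $I_{[1,N]}(N+1-x) = I_{[1,N]}(x)$, which lets me rewrite the raw indicator $I_{[1,N]}(m_{l-1}^\ast + j_l)$ (for odd $l-1$) in the uniform form $I_{[1,N]}(m_l)$. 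This symmetry is the one place any genuine argument is needed; once it is used the recursion closes cleanly.

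With the formula for $H^k e_i$ in hand, the trace is $\mathrm{tr}(H^k) = \sum_{i=1}^{N} e_i^{\,t} H^k e_i$, so I simply extract the diagonal constraint $m_k^{\ast} = i$. For even $k$ this reads $m_k = i$, i.e.\ $\sum_{q=1}^{k}(-1)^q j_q = 0$, producing the factor $\delta_{0,\sum (-1)^q j_q}$. For odd $k$ it reads $N+1-m_k = i$, i.e.\ $\sum_{q=1}^{k}(-1)^q j_q = 2i-1-N$, producing the factor $\delta_{2i-1-N,\sum (-1)^q j_q}$. These are exactly the two cases in (\ref{basic:lem2}).

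The main (and only) obstacle is keeping the bookkeeping of signs, indices, and reflections straight through the induction, in particular remembering to apply $I_{[1,N]}(N+1-x) = I_{[1,N]}(x)$ at the odd steps so that every indicator can be written against the same sequence $m_l$. Everything else is a direct computation parallel to Lemma \ref{toeplitzlemma}.
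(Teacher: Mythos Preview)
Your proposal is correct and is exactly the approach the paper has in mind: the paper's own proof consists of the single sentence ``Follow a similar procedure as in the proof of Lemma \ref{toeplitzlemma},'' and what you have written is precisely that procedure carried out in detail, with the reflection $P\,e_i=e_{N+1-i}$ and the identity $I_{[1,N]}(N+1-x)=I_{[1,N]}(x)$ being the only new ingredients beyond Lemma \ref{toeplitzlemma}. (Note that the displayed ``$\mathrm{tr}(T^{k})$'' in the statement is evidently a typo for $\mathrm{tr}(H^{k})$, as your computation confirms.)
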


\begin{proof}
Follow a similar procedure as in the proof of Lemma
\ref{toeplitzlemma}.
\end{proof}

We are now ready to prove the main results of this section.

\begin{proof}[Proof of Theorem \ref{tpg}] We prove the theorem by the following steps:

\textbf{Step 1. Calculation of the Moments}

Let \[m_{k,N}=\mathbb{E}[\int x^{k} \mu_{X_{N}}(d\, x)]=\frac{1}{N}
\mathbb{E}[\textrm{tr} (X_{N}^{k})].\] Using Lemma 2.4, we have
\begin{equation*}
 m_{k,N}=\frac{N^{-\frac{k}{2}-1}}{(2-b)^{\frac{k}{2}}b^{\frac{k}{2}}}\sum_{i=1}^{N}\,\sum_{j_{1},\cdots,j_{k}=-b_{N}}^{b_{N}}
\prod_{l=1}^{k}I_{[1,N]}(i+\sum_{q=1}^{l}j_{q}) \
\large{\delta}_{0,\sum\limits_{q=1}^{k}j_{q}}\mathbb{E}[\prod_{l=1}^{k}a_{j_{l}}].
 \end{equation*}
For \textbf{j}=$(j_{1},\cdots,j_{k})$, we construct a set of numbers
$S_{\textbf{j}}=\{|j_{1}|,\ldots,|j_{k}|\}$ with multiplicities.
Note that the random variables whose subscripts have different
absolute values are independent. If $S_{\textbf{j}}$ has one number
with multiplicity 1, by independence of the random variables, we
have $\mathbb{E}[\prod_{l=1}^{k}a_{j_{l}}]=0$. Thus the only
contribution to the $k$-th moment comes when each of
$S_{\textbf{j}}$ has multiplicity at least 2, which implies that
$S_{\textbf{j}}$ has at most $[\frac{k}{2}]$ distinct numbers. Once
we have specified the distinct numbers of $S_{\textbf{j}}$, the
subscripts $j_{1},\cdots,j_{k}$ are determined in at most
$2^{k}([\frac{k}{2}])^{k}$ ways. By independence and the assumptions
(\ref{hermitiantoeplitz2}) and (\ref{symmetrictoeplitz2}), we find
\[m_{k,N}=O((b_{N})^{-\frac{k}{2}+[\frac{k}{2}]})=O(N^{-\frac{k}{2}+[\frac{k}{2}]}).\]
Therefore, for odd $k$
\[\lim_{N\ra \iy}m_{k,N}=0.\]
It suffices to deal with $m_{2k,N}$. If each of
$S_{\textbf{j}}=\{|j_{1}|,\cdots,|j_{2k}|\}$ (here
\textbf{j}=$(j_{1},\cdots,j_{2k})$) has multiplicity at least 2 but
one of which at least 3, then $S_{\textbf{j}}$ has at most $k-1$
distinct numbers. Thus, the contribution of such terms to $m_{2k,N}$
is $O(N^{-1})$. So it suffices to consider all pair partitions of
$[2k]=\{1,2,\cdots,2k\}$. That is, for $\pi \in
\mathcal{P}_{2}(2k)$, if $p\thicksim_{\pi}q$, then it is always the
case  that $j_{p}=j_{q}$ or $j_{p}=-j_{q}$.  Under the condition
\[\sum\limits_{q=1}^{2k}j_{q}=0\] according to (\ref{basic:lem1}), considering the main
contribution to the trace,  we should take $j_{p}=-j_{q}$ ;
otherwise, there exists $p_{0}, q_{0}\in [2k]$ such that
$$j_{p_{0}}=j_{q_{0}}=\frac{1}{2}(j_{p_{0}}+j_{q_{0}}-\sum\limits_{q=1}^{2k}j_{q}).$$
We can choose other $k-1$ distinct numbers, which determine
$j_{p_{0}}=j_{q_{0}}$. This shows that there is a loss of at least
 one degree of freedom and the contribution of such terms is $O(N^{-1}).$ Thus
$a_{j_{p}}$ and $a_{j_{q}}$ are conjugate. So we can write \be
\label{integral:sum} m_{2k,N}=o(1)+
\frac{N^{-k-1}}{(2-b)^{k}b^{k}}\sum_{\pi \in \mathcal{P}_{2}(2k)
}\sum_{i=1}^{N}\,\sum_{j_{1},\cdots,j_{k}=-b_{N}}^{b_{N}}
\prod_{l=1}^{2k}I_{[1,N]}(i+\sum_{q=1}^{l}
\epsilon_{\pi}(q)j_{\pi(q)}). \ee

Follow a similar argument from \cite{bdj,bmp}: for fixed $\pi \in
\mathcal{P}_{2}(2k)$,
$$
\frac{1}{N^{k+1}}\sum_{i=1}^{N}\,\sum_{j_{1},\cdots,j_{k}=-b_{N}}^{b_{N}}
\prod_{l=1}^{2k}I_{[1,N]}(i+\sum_{q=1}^{l}
\epsilon_{\pi}(q)j_{\pi(q)}),$$ i.e.
$$
\frac{1}{N^{k+1}}\sum_{i=1}^{N}\,\sum_{j_{1},\cdots,j_{k}=-b_{N}}^{b_{N}}
\prod_{l=1}^{2k}I_{[\frac{1}{N},1]}(\frac{i}{N}+\sum_{q=1}^{l}
\epsilon_{\pi}(q)\frac{j_{\pi(q)}}{N}),$$ can be considered as a
Riemann sum of the definite integral
$$\int_{[0,1]\times [-b,b]^{k}}\prod_{l=1}^{2k}I_{[0,1]}(x_{0}+\sum_{q=1}^{l}
\epsilon_{\pi}(q)x_{\pi(q)}) \prod_{l=0}^{k}\mathrm{d}\, x_{l}.$$
Given $\pi \in \mathcal{P}_{2}(2k)$, as $N\ra \iy$, each term in
(\ref {integral:sum}) can be treated as an integral. Thus we have
$m_{2k}(\gamma_{_{T}}(b))=\lim\limits_{N\ra\iy}m_{2k,N}$ is the
representation of the form (\ref{bandtoeplitz:moment}).

\textbf{Step 2. Carleman's Condition}

Obviously, from (\ref{bandtoeplitz:moment}) we have
\[m_{2k}(\gamma_{_{T}}(b))\leq \frac{1}{(2-b)^{k}}\sum_{\pi \in
\mathcal{P}_{2}(2k)}\int_{[0,1]\times [-1,1]^{k}}1
\prod_{l=0}^{k}\mathrm{d}\, x_{l}=\frac{2^{k}}{(2-b)^{k}}(2k-1)!!.
\]
By Carleman's theorem (see \cite{feller}), the limit distribution
$\gamma_{_{T}}(b)$ is uniquely determined by the moments.

\textbf{Step 3. Almost Sure Convergence}

It suffices to show that \be
\sum_{N=1}^{\infty}\frac{1}{N^{4}}\mathbb{E}[(\textrm{tr}
(X_{N}^{k})-\mathbb{E}[\textrm{tr} (X_{N}^{k})])^{4}]<\infty\ee for
each fixed $k$. Indeed, by Lemma \ref{toeplitzlemma}, we have
$$\textrm{tr} (X_{N}^{k})=\frac{1}{N^{\frac{k}{2}}}\sum_{p_{0},\textbf{p}} A[p_{0};\textbf{p}]$$
where
$$A[p_{0};\textbf{p}]=\prod_{l=1}^{k}a_{p_{l}}\prod_{l=1}^{k}I_{[1,N]}(p_{0}+\sum_{q=1}^{l}p_{q})
\large{\delta}_{0,\sum\limits_{q=1}^{k}p_{q}},$$
$\textbf{p}=(p_{1},\ldots,p_{k})$, and the summation
$\sum_{p_{0},\textbf{p}}$
 runs over all possibilities
that $\textbf{p} \in \{-b_{N},\ldots,b_{N}\}^{k}$ and $p_{0} \in
\{1,\ldots,N\}$. Thus,
\begin{align} \label{fourtermsum}\frac{1}{N^{4}}&\mathbb{E}[(\textrm{tr}
(X_{N}^{k})-\mathbb{E}[\textrm{tr} (X_{N}^{k})])^{4}]\nonumber\\
&=\frac{(2-b)^{-2k}b^{-2k}}{N^{4+2k}}\sum_{i_{0},j_{0},s_{0},t_{0}\atop
\textbf{i},\textbf{j},\textbf{s},\textbf{t}}
\mathbb{E}[\prod_{\textbf{p}\in
\{\textbf{i},\textbf{j},\textbf{s},\textbf{t}\}}(A[p_{0};\textbf{p}]-\mathbb{E}[A[p_{0};\textbf{p}]])]
 \end{align}
where the summation $\sum_{i_{0},j_{0},s_{0},t_{0}\atop
\textbf{i},\textbf{j},\textbf{s},\textbf{t}}$ runs over all
possibilities that $\textbf{i},\textbf{j},\textbf{s},\textbf{t} \in
\{-b_{N},\ldots,b_{N}\}^{k}$ and $i_{0},j_{0},s_{0},t_{0} \in
\{1,\ldots,N\}$.

For $\textbf{i}=(i_{1},\ldots,i_{k})$, as in \textbf{Step} 1 we
still construct a set of numbers
$S_{\textbf{i}}=\{|i_{1}|,\ldots,|i_{k}|\}$ with  multiplicities.
Obviously, if
 one of $S_{\textbf{i}}, \ldots, S_{\textbf{t}}$, for example, $S_{\textbf{i}}$ does not have any
 number coincident with numbers of the other three sets, then the
 term in (\ref{fourtermsum}) equals 0 by independence. Also, if the union  $S=S_{\textbf{i}}\cup \cdots \cup
 S_{\textbf{t}}$ has one number with  multiplicity 1,  the
 term in (\ref{fourtermsum}) equals 0.

Now, let us estimate the non-zero term in (\ref{fourtermsum}).
Assume that $S$ has $p$ distinct numbers with multiplicities
$\nu_{1}, \ldots, \nu_{p}$, subject to the constraint $\nu_{1}+
\ldots+ \nu_{p}=4k$. If $p\leq 2k-2$, by
(\ref{hermitiantoeplitz1})--(\ref{symmetrictoeplitz2}), the
contribution of the terms corresponding to $S$ is bounded by
$$C_{k}\frac{(2-b)^{-2k}b^{-2k}}{N^{4+2k}} N^{4} \,b_{N}^{2k-2}\leq C_{k,b}\,N^{-2}$$
for some constants $C_{k}$ and $C_{k,b}$.

For the case of $p=2k-1$, we have two numbers shared by three of
four sets $S_{\textbf{i}}, \ldots, S_{\textbf{t}}$ each or one
number shared by the four sets. It is not hard to see that there
always exists one of $S_{\textbf{i}}, \ldots, S_{\textbf{t}}$, for
example, $S_{\textbf{i}}$, which has one number with multiplicity 1,
denoted by $|i_{q_{0}}|$ for some $q_{0}\in \{1,\ldots,k\}$.
Consequently, under the constraint $$\sum_{q=1}^{k}i_{q}=0$$
according to (\ref{basic:lem1}), we have
$i_{q_{0}}=i_{q_{0}}-\sum\limits_{q=1}^{k}i_{q}$. Hence we can
choose the other $2k-2$ distinct numbers of $S$, which determine the
subscript $i_{q_{0}}$. This shows that there is a loss of at least
one degree of freedom and  the contribution of the terms
corresponding to $S$ is bounded by $C_{k,b}\,N^{-2}$.

The case of $p=2k$ implies each number in $S$ occurs exactly two
times.  Thus there exist two of $S_{\textbf{i}}, \ldots,
S_{\textbf{t}}$, for example, $S_{\textbf{i}}$ and $S_{\textbf{j}}$,
which share one number with one of $S_{\textbf{s}}$ and
$S_{\textbf{t}}$ respectively. Consequently,  under the constraints
\[\sum\limits_{q=1}^{k}i_{q}=0 \ \ \mbox{and}\ \  \sum\limits_{q=1}^{k}j_{q}=0\] according to (\ref{basic:lem1}), there is a
loss of at least two degrees of freedom and  the contribution of the
terms corresponding to $S$ is bounded by $C_{k,b}\,N^{-2}$.

Therefore,
\[\frac{1}{N^{4}} \mathbb{E}[(\textrm{tr}
(X_{N}^{k})-\mathbb{E}[\textrm{tr} (X_{N}^{k})])^{4}]\leq
C_{k,b}\,N^{-2}\] and almost sure convergence is proved.
Consequently, the proof of Theorem \ref{tpg} is complete.
\end{proof}

\begin{proof}[Proof of Theorem \ref{hpg}]
Using Lemma \ref{hankellemma} and our  assumptions, the proof is
very similar to that of of Theorem \ref{tpg}. Here we don't repeat
the process. Note that in the representation of even moments in
Eq.\,(\ref{bandhankel:moment}) the pair partition $\pi \in
\mathcal{P}^{1}_{2}(2k)$ comes from the constraint \be
\sum\limits_{q=1}^{k}(-1)^{q}j_{q}=0\ee in the form of the trace for
Hankel matrices in Lemma \ref{hankellemma}. \end{proof}

\section{Slow Growth of $b_{N}$}
\setcounter{equation}{0}

 We give our results for Hankel and
Toeplitz band matrices with slow growth of the bandwidth as follows.

\begin{theorem}\label{tsg}Let $T_{N}$ be either a Hermitian  ((\ref{hermitiantoeplitz1})--(\ref{hermitiantoeplitz2}))
or real symmetric
((\ref{symmetrictoeplitz1})--(\ref{symmetrictoeplitz2})) Toeplitz
random band matrix, where $b_{N}\rightarrow \infty$ but $b_{N}/N
\rightarrow 0$ as $N\rightarrow \infty$. Take the normalization
$X_{N}=T_{N}/\sqrt{2\, b_{N}}$, then $\mu_{X_{N}}$ converges weakly
 to the standard Gaussian distribution.
 In addition, if there exist some positive constants $\epsilon_{0}$ and $C$ such that
 $b_{N}\geq C\,N^{\frac{1}{2}+\epsilon_{0}}$, then $\mu_{X_{N}}$ converges almost surely to the standard Gaussian distribution.
\end{theorem}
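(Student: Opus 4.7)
The plan is to adapt the three-step moment argument from the proof of Theorem \ref{tpg}, using the normalization $\sqrt{2b_N}$ in place of $\sqrt{(2-b)bN}$. Starting from Lemma \ref{toeplitzlemma}, write
\[
m_{k,N}=\frac{1}{N(2b_N)^{k/2}}\sum_{i=1}^{N}\sum_{j_1,\ldots,j_k=-b_N}^{b_N}\prod_{l=1}^{k}I_{[1,N]}\!\bigl(i+\sum_{q=1}^{l}j_q\bigr)\,\delta_{0,\sum_q j_q}\,\mathbb{E}\!\bigl[\prod_l a_{j_l}\bigr].
\]
The independence-plus-multiplicity analysis of Step~1 of Theorem \ref{tpg} carries over verbatim: odd moments vanish in the limit, terms in which any $|j_l|$ has multiplicity $\geq 3$ contribute $O(b_N^{-1})$, and only pair partitions $\pi\in\mathcal{P}_2(2k)$ with $j_p=-j_q$ whenever $p\sim_\pi q$ survive.

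The key simplification in the slow-growth regime is that the indicator constraints are almost always satisfied: each partial sum $\sum_{q=1}^{l}\epsilon_\pi(q)j_{\pi(q)}$ has absolute value at most $kb_N=o(N)$, so for $i\in[kb_N+1,\,N-kb_N]$ all $2k$ indicators equal $1$, while the remaining boundary set has size $O(b_N)=o(N)$. Hence for each fixed $\pi\in\mathcal{P}_2(2k)$ the inner sum is asymptotic to $N(2b_N+1)^k$, and after division by $N(2b_N)^k$ one obtains
\[
\lim_{N\to\infty}m_{2k,N}=|\mathcal{P}_2(2k)|=(2k-1)!!,
\]
which are precisely the moments of $\mathcal{N}(0,1)$. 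Carleman's theorem then identifies the limit uniquely, so $\mathbb{E}[\mu_{X_N}]$ converges weakly to the standard Gaussian; upgrading this to weak convergence of $\mu_{X_N}$ itself (in probability) requires only the estimate $\mathrm{Var}(m_{k,N})\to 0$, which is a byproduct of the fourth-moment computation below.

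For the almost-sure statement under $b_N\geq CN^{1/2+\epsilon_0}$, I would invoke Borel--Cantelli by proving
\[
\sum_{N=1}^{\infty}\frac{1}{N^{4}}\mathbb{E}\bigl[(\textrm{tr}(X_N^k)-\mathbb{E}[\textrm{tr}(X_N^k)])^{4}\bigr]<\infty
\]
for every fixed $k$. Expanding as in (\ref{fourtermsum}) and classifying by the number $p$ of distinct absolute values in $S=S_{\textbf{i}}\cup S_{\textbf{j}}\cup S_{\textbf{s}}\cup S_{\textbf{t}}$, each of the cases $p\leq 2k-2$, $p=2k-1$, $p=2k$ is handled exactly as in Step~3 of Theorem \ref{tpg}, the only change being that the normalization factor $((2-b)bN)^{-2k}$ is replaced by $(2b_N)^{-2k}$. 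The same loss-of-degrees-of-freedom arguments, using the four trace constraints of the form $\sum_q i_q=0$, then give
\[
\frac{1}{N^{4}}\mathbb{E}\bigl[(\textrm{tr}(X_N^k)-\mathbb{E}[\textrm{tr}(X_N^k)])^{4}\bigr]\leq C_k\, b_N^{-2}\leq C_k'\, N^{-1-2\epsilon_0},
\]
which is summable. The main obstacle is verifying that this bound carries the factor $b_N^{-2}$ rather than only $N^{-2}$; this is precisely where the hypothesis $b_N\geq CN^{1/2+\epsilon_0}$ is needed, since summability of $\sum b_N^{-2}$ along $N\in\mathbb{N}$ requires $b_N$ to grow strictly faster than $\sqrt{N}$. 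Without such an $\epsilon_0$ the fourth-moment method yields only weak convergence in probability, matching the first half of the theorem.
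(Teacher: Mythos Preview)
Your proposal is correct and follows essentially the same route as the paper's own proof: reduce via Lemma~\ref{toeplitzlemma} to pair partitions with $j_p=-j_q$, observe that the indicator constraints become asymptotically vacuous because the partial sums are $O(b_N)=o(N)$, obtain $(2k-1)!!$ for the even moments, and then reuse the fourth-moment/degree-of-freedom count from Step~3 of Theorem~\ref{tpg} to get the $O(b_N^{-2})$ bound and invoke $b_N\geq CN^{1/2+\epsilon_0}$ for summability. Your interior/boundary split for $i$ is a slightly cleaner way of saying what the paper writes more informally as ``$I_{[1,N]}(\cdot)=1+o(1)$'', and your remark that a variance estimate is needed to pass from convergence of expected moments to weak convergence of $\mu_{X_N}$ in probability is a point the paper leaves implicit.
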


\begin{theorem}\label{hsg}
Let $H_{N}$ be a real symmetric
((\ref{symmetrichankel1})--(\ref{symmetrichankel2})) Hankel random
band matrix, where $b_{N}\rightarrow \infty$ but $b_{N}/N
\rightarrow 0$ as $N\rightarrow \infty$. Take the normalization
$Y_{N}=H_{N}/\sqrt{2\, b_{N}}$, then $\mu_{Y_{N}}$ converges weakly
 to the distribution $f(x)=|x|\exp(-x^{2})$.
 In addition, if there exist positive constants $\epsilon_{0}$ and $C$ such that
 $b_{N}\geq C\,N^{\frac{1}{2}+\epsilon_{0}}$, then $\mu_{Y_{N}}$ converges almost surely to
 the distribution $f(x)=|x|\exp(-x^{2})$.
\end{theorem}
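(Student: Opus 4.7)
The approach is to adapt the moment method of Theorem \ref{tsg}, replacing Lemma \ref{toeplitzlemma} by Lemma \ref{hankellemma} and borrowing the combinatorial reduction used in Theorem \ref{hpg}. Set $m_{k,N} = \frac{1}{N(2b_N)^{k/2}}\,\mathbb{E}[\textrm{tr}(H_N^k)]$ and expand via (\ref{basic:lem2}). Exactly as in Step 1 of the proof of Theorem \ref{tpg}, odd moments vanish in the limit, partitions $\pi \in \mathcal{P}(2k)$ with a block of size $\ge 3$ contribute $O(b_N^{-1})$, and only pair partitions $\pi \in \mathcal{P}_2(2k)$ survive to leading order.

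The point specific to the Hankel setting is that under assumption (iii) the family $\{a_j\}_{j\in\mathbb{Z}}$ is fully independent --- there is no identification $a_j = a_{-j}$ --- so for a pair $p \sim_\pi q$ one has $\mathbb{E}[a_{j_p} a_{j_q}] \ne 0$ precisely when $j_p = j_q$ (not $\pm$ as in the Toeplitz case). Substituting this into the Hankel constraint $\sum_{q=1}^{2k}(-1)^q j_q = 0$ from the even-$k$ case of (\ref{basic:lem2}) yields
\[
\sum_{\{p,q\}\in\pi}\bigl((-1)^p + (-1)^q\bigr)\, j_p = 0,
\]
which is vacuous if and only if every block of $\pi$ pairs one odd with one even index, i.e.\ $\pi \in \mathcal{P}^1_2(2k)$. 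Any other pair partition imposes a non-trivial linear relation on the $j$'s, losing at least one degree of freedom and contributing $O(b_N^{-1})$. For $\pi \in \mathcal{P}^1_2(2k)$ the $k$ free indices range over $\{-b_N,\ldots,b_N\}$ and $i$ over $\{1,\ldots,N\}$; since $b_N = o(N)$ the indicators $I_{[1,N]}$ all equal $1$ except on a set of relative size $O(b_N/N)$, so $m_{2k,N} \to |\mathcal{P}^1_2(2k)| = k!$. This matches $\int_{\mathbb{R}} x^{2k}\,|x|\,e^{-x^2}\,dx = k!$, and Carleman's condition $\sum_k (k!)^{-1/(2k)} = \infty$ (by Stirling) uniquely identifies $f(x)=|x|e^{-x^2}$, yielding weak convergence.

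For almost sure convergence under $b_N \ge C N^{\frac{1}{2}+\epsilon_0}$, I would mirror Step 3 of the proof of Theorem \ref{tpg}: expand $\frac{1}{N^4}\,\mathbb{E}[(\textrm{tr}(Y_N^k) - \mathbb{E}[\textrm{tr}(Y_N^k)])^4]$ as a sum over four-tuples $(\mathbf{i},\mathbf{j},\mathbf{s},\mathbf{t})$ with base indices, and perform the same case analysis on the number $p$ of distinct values in $S_{\mathbf{i}}\cup S_{\mathbf{j}}\cup S_{\mathbf{s}}\cup S_{\mathbf{t}}$. The dominant $p=2k$ case uses the two Hankel trace constraints (one per pair of coupled tuples) to remove two degrees of freedom, producing a bound $C_k\, b_N^{-2}$. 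Under $b_N \ge C N^{\frac{1}{2}+\epsilon_0}$ this is $O(N^{-1-2\epsilon_0})$, summable in $N$, and Borel--Cantelli completes the argument.

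The main obstacle is verifying that outside $\mathcal{P}^1_2(2k)$ the induced linear relation on the free $j$'s is genuinely non-degenerate (rather than a tautology masked by the alternating signs of the Hankel constraint), and, in the $p=2k$ case of the fourth-moment estimate, confirming that exactly two --- not one --- degrees of freedom are lost under the Hankel constraints, so that the stated threshold $\frac{1}{2}+\epsilon_0$ is sharp for the Borel--Cantelli step.
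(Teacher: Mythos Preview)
Your proposal is correct and follows essentially the same route as the paper, which in fact gives no separate proof of Theorem~\ref{hsg} beyond the remark that it is ``very similar'' to Theorem~\ref{tsg}. You have supplied exactly the Hankel-specific details the paper leaves implicit: the full independence of $\{a_j\}_{j\in\mathbb{Z}}$ forces $j_p=j_q$ on paired indices, the alternating constraint $\sum_q(-1)^q j_q=0$ then singles out $\mathcal{P}^1_2(2k)$ (as the paper notes in its proof of Theorem~\ref{hpg}), and $|\mathcal{P}^1_2(2k)|=k!$ matches the even moments of $|x|e^{-x^2}$. Your stated ``obstacles'' are not genuine gaps: if $\pi\notin\mathcal{P}^1_2(2k)$ some block has $(-1)^p+(-1)^q=\pm2$, so the relation is manifestly non-trivial; and in the fourth-moment $p=2k$ case the two Hankel trace constraints play the same role as the two Toeplitz constraints in Step~3 of Theorem~\ref{tpg}, each removing one degree of freedom.
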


\begin{remark}
For real symmetric palindromic Toeplitz Matrices, i.e. real
symmetric Toeplitz matrices under extra conditions:
$a_{j-1}=a_{N-j}, 0<j<N$, Massey, Miller and
 Sinsheimer \cite{mms} have obtained the Gaussian normal distribution for eigenvalues. And for random reverse circulant matrices, i.e., Hankel
matrices under extra conditions: $a_{-j}=a_{N-j}, 1<j<N$, Bose and
Mitra \cite{bm} obtained the same distribution of eigenvalues
$f(x)=|x|\,\exp(-x^{2})$ as in Theorem \ref{hsg}.
\end{remark}

Since proofs of both theorems are very similar, we prove only the
Toeplitz case.

\begin{proof}[Proof of Theorem \ref{tsg}]
The proof is  quite similar to that  of Theorem \ref{tpg} and some
details will be omitted. We will lay a strong emphasis on the
derivation of the Gaussian distribution. Here the slow growth of
$b_{N}$ leads to an easy calculation of the complicated integrals.
 We now complete the proof of the Gaussian law by showing

(1) $m_{k,N}=\frac{1}{N} \mathbb{E}[\textrm{tr} (X_{N}^{k})]$
converges to the $k$-th moment of the standard Gaussian
distribution.

(2) Assume that $b_{N}\geq C\,N^{\frac{1}{2}+\epsilon_{0}}$ for some
positive constants $\epsilon_{0}$ and $C$. For each fixed $k$, \be
\sum_{N=1}^{\infty}\frac{1}{N^{4}} \mathbb{E}[(\textrm{tr}
(X_{N}^{k})-\mathbb{E}[\textrm{tr} (X_{N}^{k})])^{4}]<\infty.\ee

By Lemma \ref{toeplitzlemma} and our assumptions
(\ref{hermitiantoeplitz1})--(\ref{symmetrictoeplitz2}), it follows
that
\[m_{k,N}=O((b_{N})^{-\frac{k}{2}+[\frac{k}{2}]}).\]
Since  $b_{N}\rightarrow \infty$ as $N\rightarrow \infty$, for odd
$k$
\[\lim_{N\ra \iy}m_{k,N}=0.\]
It suffices to deal with $m_{2k,N}$. However, again by Lemma
\ref{toeplitzlemma} and our assumptions, the contribution with the
exception of all pair partitions to $m_{2k,N}$ is $O(b_{N}^{-1})$.
So it suffices to consider all pair partitions of
$[2k]=\{1,2,\cdots,2k\}$. That is, for $\pi \in
\mathcal{P}_{2}(2k)$, if $p\thicksim_{\pi}q$, then it is always the
case  that $j_{p}=j_{q}$ or $j_{p}=-j_{q}$.  Under the condition
\[\sum\limits_{q=1}^{2k}j_{q}=0\] according to (\ref{basic:lem1}), considering the main
contribution to the trace,  we should take $j_{p}=-j_{q}$
(otherwise, there is a loss of at least one degree of freedom). Thus
$a_{j_{p}}$ and $a_{j_{q}}$ are conjugate. So we can write \be
\label{integral:sum2} m_{2k,N}=o(1)+
\frac{N^{-1}}{2^{k}b_{N}^{k}}\sum_{\pi \in \mathcal{P}_{2}(2k)
}\sum_{i=1}^{N}\,\sum_{j_{1},\cdots,j_{k}=-b_{N}}^{b_{N}}
\prod_{l=1}^{2k}I_{[1,N]}(i+\sum_{q=1}^{l}
\epsilon_{\pi}(q)j_{\pi(q)}). \ee

Note that the constraints \be 1\leq i+\sum_{q=1}^{l}
\epsilon_{\pi}(q)j_{\pi(q)}\leq N,\ \ 1\leq l\leq 2k, \nonumber\ee
i.e.
 \be \frac{1}{N}\leq \frac{i}{N}+\sum_{q=1}^{l}
\epsilon_{\pi}(q)\frac{j_{\pi(q)}}{N}\leq 1,\ \ 1\leq l\leq 2k.
\nonumber\ee Since $$|\sum_{q=1}^{l}
\epsilon_{\pi}(q)\frac{j_{\pi(q)}}{N}|\leq l
\frac{b_{N}}{N}\rightarrow 0$$ as $N \rightarrow \iy$, we have
$$I_{[1,N]}(i+\sum_{q=1}^{l}
\epsilon_{\pi}(q)j_{\pi(q)})=1+o(1).$$Hence \be
m_{2k,N}=o(1)+(2k-1)!!\,(\frac{2b_{N}+1}{2b_{N}})^{k}(1+o(1))^{2k}\ee
converges to $(2k-1)!!$ as $N \rightarrow \iy$. Assertion (1) is
then proved.

We now prove (2). The same argument as in the proof of almost sure
convergence in Theorem \ref{tpg} shows that
\[\frac{1}{N^{4}} \mathbb{E}[(\textrm{tr}
(X_{N}^{k})-\mathbb{E}[\textrm{tr}
(X_{N}^{k})])^{4}]=O(\frac{1}{b_{N}^{2}}).\] Since $b_{N}\geq
C\,N^{\frac{1}{2}+\epsilon_{0}}$, we have
\[\frac{1}{N^{4}} \mathbb{E}[(\textrm{tr}
(X_{N}^{k})-\mathbb{E}[\textrm{tr} (X_{N}^{k})])^{4}]\leq
C_{k}\frac{1}{N^{1+2\epsilon_{0}}},\] where $C_{k}$ is a constant
depending on $k$ only. This completes the proof of assertion (2).

The proof of Theorem \ref{tsg} is then complete.
\end{proof}

\section{First Four Moments of $\gamma_{_{T}}(b)$ and $\gamma_{_{H}}(b)$}
\setcounter{equation}{0}

We will compute the second and fourth moments of $\gamma_{_{T}}(b)$
and $\gamma_{_{H}}(b)$. From the fourth moment we can read that the
$\gamma_{_{T}}(b)$'s are different for different $b$'s, the
$\gamma_{_{H}}(b)$'s different for different $b\in [0,\frac{1}{2}]$
and the $\gamma_{_{H}}(b)$'s different for different $b\in
[\frac{1}{2},1]$. However, for the $2k$-th moments ($k \geq 3$), the
integrals on the right-hand sides of (\ref{bandtoeplitz:moment}) and
(\ref{bandhankel:moment}) are in general different for different
pair partitions, which makes difficult the explicit calculations of
the higher moments.

Observe that for $\pi\in \mathcal{P}^{1}_{2}(2k)$, the corresponding
integrals on the right-hand sides of (\ref{bandtoeplitz:moment}) and
(\ref{bandhankel:moment}) are in fact the same. Therefore, it is
sufficient to calculate the integral in the Toeplitz case for every
pair partition. For the pair partition $\pi \in\mathcal{P}_{2}(2k)$,
we introduce the symbol
\[p_{\pi}(b)=\int_{[0,1]\times
[-1,1]^{k}}\prod_{j=1}^{2k}I_{[0,1]}(x_{0}+b\sum_{i=1}^{j}\epsilon_{\pi}(i)\,x_{\pi(i)})
\prod_{l=0}^{k}\mathrm{d}\, x_{l}.\]

For $k$=1 it is easy to obtain the second moments
\[m_{2}(\gamma_{_{T}}(b))=1,\ \ \ m_{2}(\gamma_{_{H}}(b))=1.\]
Thus the fourth moment is the first ``free" moment in seeing the
shape of the distribution. When $k=2$, a direct calculation (this
can be checked using Mathematica) for all pair partitions
$$\pi_{1}=\{\{1,2\},\{3,4\} \},
 \pi_{2}=\{ \{1,4\},\{2,3\} \},
\pi_{3}=\{\{1,3\},\{2,4\}\}$$ of $\mathcal{P}_{2}(4)$ shows

\be \label{integral1&2}p_{\pi_{1}}(b)=p_{\pi_{2}}(b)=
 \begin{cases} \frac{2}{3}(6-5b),\ \ b\in [0,\frac{1}{2}];\\
\frac{-1+6b-2b^{3}}{3b^{2}},\ \ b\in (\frac{1}{2},1]
\end{cases}
\ee
 and
\be\label{integral3}p_{\pi_{3}}(b)=
 \begin{cases} 4(1-b),\ \ \hspace{1.3cm} b\in [0,\frac{1}{2}];\\
\frac{2(-1+6b-6b^{2}+2b^{3})}{3b^{2}},\ \ b\in (\frac{1}{2},1].
\end{cases}
\ee
 Thus, for $b\in [0,\frac{1}{2}]$,
\[m_{4}(\gamma_{_{T}}(b))=\frac{4(9-8b)}{3(2-b)^{2}}\]
which strictly decreases on $[0,\frac{1}{2}]$. When $b\in
(\frac{1}{2},1]$,
\[m_{4}(\gamma_{_{T}}(b))=\frac{4(-1+6b-3b^{2})}{3b^{2}(2-b)^{2}}\]
which also strictly decreases on $(\frac{1}{2},1]$.

Therefore, one knows that $m_{4}(\gamma_{_{T}}(b))$ strictly
decreases on $[0,1]$. Further, the distributions
$\gamma_{_{T}}(b)$'s are different. In particular,
$\gamma_{_{T}}(b)$ ($0<b\leq 1$) is indeed different from the normal
distribution. Note that for $b=1$ the fact that the limit
distribution is not Gaussian has been observed in \cite{bcg,bdj,hm}.

We turn to the Hankel-type distribution $\gamma_{_{H}}(b)$. From
(\ref{integral1&2}), one obtains that for $b\in [0,\frac{1}{2}]$
\[m_{4}(\gamma_{_{H}}(b))=\frac{4(6-5b)}{3(2-b)^{2}}\] which strictly
decreases on $[0,\frac{1}{2}]$ while for $b\in (\frac{1}{2},1]$,
\[m_{4}(\gamma_{_{H}}(b))=\frac{2(-1+6b-2b^{2})}{3b^{2}(2-b)^{2}}\]
which strictly increases on $(\frac{1}{2},1]$. Thus, according to
the fourth moments, we only know that the $\gamma_{_{H}}(b)$'s
($0\leq b \leq \frac{1}{2}$) are different and the
$\gamma_{_{H}}(b)$'s ($\frac{1}{2}\leq b \leq 1$) are different.

\vspace{.2cm}

\noindent\textbf{An added note.} After the paper as submitted we
learned from the Associate Editor and the referee about a related
preprint ``Limiting Spectral Distribution of Some Band Matrices" by
Basak and Bose at the site
http://www.isical.ac.in/~statmath/html/publication/techreport.html.
Although their paper and ours contain the same results for Hankel
and Toeplitz band matrices, the former assumes less integrability on
the entries of a matrix, allows more general ``rates" for the
bandwidth, and also covers more ensembles of ``structured matrices"
related to Toeplitz matrices. On the other hand, our paper covers
Hermitian Toeplitz matrices and  gives low order moment
calculations. Besides, our paper gives a different method for
analyzing Toeplitz matrices by treating them as a linear combination
of deterministic matrices with independent coefficients. This method
 can be used to derive
other results, including those that deal with semicircle law.

\section*{Acknowledgements}
The authors thank the Associate Editor for valuable comments, the
referees for very helpful comments and careful writing instruction
on the draft of this paper, and Steven Miller for information on his
paper.


\end{document}